\newtheorem{Theorem}{Theorem}[section]
\newtheorem{Proposition}[Theorem]{Proposition}
\newtheorem{Definition}[Theorem]{Definition}
\numberwithin{equation}{section}
\begin{document}
\sloppy
\title[A convergence not metrizable]
{A convergence not metrizable}

\author[L. Rivera]{Luis David Rivera Barreno}
\address{\vspace{-8mm} Grupo de Geometría EC\\ 
    	Universidad Central del Ecuador (UCE)\\
    	Quito, Ecuador}
\email{ldriverab18@gmail.com}

\date{December 19, 2025} 

\begin{abstract}
    Certain notions of convergence of sequences functions such
    as pointwise convergence and (uniform) convergence on compact 
    or bounded sets come from suitable topological function spaces; see \cite{Lima}. 
    Under certain conditions these topologies involved
    are metrizable, which in an advantage since there is an extensive theory on
    convergence in metric spaces. However, the case of pointwise convergence is
    delicate, since it is shown that under certain hypotheses this form of convergence 
    of sequences of functions is not equivalent to convergence in metric.
\end{abstract}

\subjclass[2020]{54A20; 40A30}
\keywords{Convergence; metric spaces}

\maketitle
%%%%%%%%%%%%%%%%%%%%%%%%%%%%%%%%%%%%%%%%%%%%%%%%%%%%%%%%%%%%%%%%%%%%%%%%%%%%%%%%%%%%%%%%
%%%%%%%%%%%%%%%%%%%%%%%%%%%%%%%%%%%%%%%%%%%%%%%%%%%%%%%%%%%%%%%%%%%%%%%%%%%%%%%%%%%%%%%%
In the paper \cite{Fort}, M. K. Fort Jr., establishes the need to provide a 
correct proof of the fact that there is no metric on the set of continuous real
functions defined on the closed interval $[0,1]$ such that the convergence 
of a sequence of functions with respect to this metric is equivalent 
to its pointwise convergence. This paper aims to provide a slightly more 
general proof, but using the ideas of \cite{Lima}.

%%%%%%%%%%%%%%%%%%%%%%%%%%%%%%%%%%%%%%%%%%%%%%%%%%%%%%%%%%%%%%%%%%%%%%%%%%%%%%%%%%%%%%%%
%%%%%%%%%%%%%%%%%%%%%%%%%%%%%%%%%%%%%%%%%%%%%%%%%%%%%%%%%%%%%%%%%%%%%%%%%%%%%%%%%%%%%%%%
Denote by the set of natural numbers to $\mathbb{N}:=\{1,2,3,\ldots\}$. 
The symbols $\mathbb{R}$, $\mathbb{Q}$ and 
$\mathbb{R}\setminus \mathbb{Q}$ denote the set of real numbers, 
the set of rational numbers and
the set of irrational numbers, respectively. 
For given metric spaces $(M,d_{M})$ and $(N,d_{N})$, 
the set of functions from $M$ to $N$ is denoted as $\mathcal{F}(M,N)$ 
and the set of continuous functions from $M$ to $N$ is denoted as $C(M,N)$.

%%%%%%%%%%%%%%%%%%%%%%%%%%%%%%%%%%%%%%%%%%%%%%%%%%%%%%%%%%%%%%%%%%%%%%%%%%%%%%%%%%%%%%%%
%%%%%%%%%%%%%%%%%%%%%%%%%%%%%%%%%%%%%%%%%%%%%%%%%%%%%%%%%%%%%%%%%%%%%%%%%%%%%%%%%%%%%%%%

In order to fix terminology, we consider the following definition.

\begin{Definition}
    A metric space $(M,d)$ is said to be \emph{strongly 
    second contable} if there exists a dense and contable subset $D\subset M$ 
    such that $M\setminus D$ is also dense. 
\end{Definition}

An immediate example of a \emph{strongly second contable} metric space is
$\mathbb{R}$ endowed with the usual topology, since $\mathbb{Q}$ is a dense contable 
subset whose complement $\mathbb{R}\setminus \mathbb{Q}$ is dense. 

%%%%%%%%%%%%%%%%%%%%%%%%%%%%%%%%%%%%%%%%%%%%%%%%%%%%%%%%%%%%%%%%%%%%%%%%%%%%%%%%%%%%%%%
%%%%%%%%%%%%%%%%%%%%%%%%%%%%%%%%%%%%%%%%%%%%%%%%%%%%%%%%%%%%%%%%%%%%%%%%%%%%%%%%%%%%%%%

\begin{Proposition}
    Let $(M,d_{M})$ be a complete, nonempty, strongly second contable metric
    space and let $(N,d_{N})$ be a metric space having a non-unitary path-component.
    Then, there does not exist a metric $d$ on $\mathcal{F}(M,N)$ 
    such that the convergence of sequences in the space $(\mathcal{F}(M,N),d)$
    is equivalent to the pointwise convergence of sequences of functions in $\mathcal{F}(M,N)$. 
\end{Proposition}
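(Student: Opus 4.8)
The plan is to reduce the statement to the failure of a single iterated-limit (diagonal) property that every metric convergence must satisfy, and then to violate that property for pointwise convergence by a fan-type construction. Say that a notion of convergence satisfies property (D) if, whenever $g_n \to g$ and, for each $n$, $g_{n,k} \to g_n$ as $k \to \infty$, there is a selection $n \mapsto k_n$ with $g_{n,k_n} \to g$. Every metric convergence satisfies (D): choosing $k_n$ with $d(g_{n,k_n}, g_n) < 1/n$ gives $d(g_{n,k_n}, g) \le 1/n + d(g_n,g) \to 0$. Hence, if some metric $d$ on $\mathcal{F}(M,N)$ had exactly the pointwise-convergent sequences as its $d$-convergent sequences, then pointwise convergence would itself satisfy (D). So I would assume such a $d$ exists and derive a contradiction by exhibiting a double array for which (D) fails pointwise.

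Before the construction I would record two facts about the data. First, from strong second numerability $M$ has no isolated points: if $\{x\}$ were open, density of $D$ would force $x \in D$ while density of $M\setminus D$ would force $x \notin D$. Second, a nonempty complete metric space without isolated points carries a Cantor scheme of nested closed balls, producing an injection of $\{0,1\}^{\mathbb{N}}$ into $M$; thus $|M| \ge \mathfrak{c}$, and in particular there is an injection $s \mapsto x_s$ of $\mathbb{N}^{\mathbb{N}}$ into $M$. Finally, a non-unitary path-component of $N$ supplies two distinct points $a \ne b$ (the endpoints of a nonconstant path), which is all I need from $N$.

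For the construction, define for $n,k \in \mathbb{N}$ the set $A_{n,k} = \{ x_s : s \in \mathbb{N}^{\mathbb{N}},\ s(n) = k \} \subseteq M$, and the function $f_{n,k} \in \mathcal{F}(M,N)$ by $f_{n,k}(x) = b$ for $x \in A_{n,k}$ and $f_{n,k}(x) = a$ otherwise. By injectivity, each point $x = x_s$ lies, for fixed $n$, in $A_{n,k}$ only for the single index $k = s(n)$; hence along each row $f_{n,k}(x) = a$ for all large $k$, so $f_{n,k} \to a$ pointwise as $k \to \infty$, for every $n$. Taking $g \equiv a$ and $g_n \equiv a$, the hypotheses of (D) are met. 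However, given any selection $t = (k_n) \in \mathbb{N}^{\mathbb{N}}$, the witness point $x_t$ satisfies $x_t \in A_{n,k_n}$ for every $n$ (since $t(n) = k_n$), so $f_{n,k_n}(x_t) = b \ne a$ for all $n$, whence $f_{n,k_n} \not\to a$ pointwise. No selection works, so (D) fails for pointwise convergence, contradicting the first paragraph and proving the Proposition.

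The step I expect to be the main obstacle is conceptual rather than computational: one must recognize that merely finding a function in the pointwise closure of a set to which no sequence converges — i.e.\ non-first-countability of the product topology — does \emph{not} by itself preclude a metric reproducing the convergent sequences, so one has to defeat every metric simultaneously, which is exactly what property (D) together with the fan array accomplishes. The other essential ingredient is the cardinality bound $|M| \ge \mathfrak{c}$, which provides enough room to index all possible diagonals $t \in \mathbb{N}^{\mathbb{N}}$ by distinct witness points; this is precisely where completeness (via Baire/the Cantor scheme) and the dense complement (via absence of isolated points) enter.
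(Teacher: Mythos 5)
Your proof is correct, but it takes a genuinely different route from the paper's. The paper also argues by contradiction, but its engine is function-theoretic: it builds the generalized Dirichlet function $\varphi$ (equal to $a$ on $D$ and $b$ on $M\setminus D$), shows each truncation $\varphi_n$ is a pointwise limit of continuous functions (Urysohn functions composed with the path $\psi$ --- this is where the non-unitary path-component is actually needed), uses the hypothetical metric to place $\varphi$ in the closure of $C(M,N)$ and hence realize it as a pointwise limit of continuous functions, and then invokes the theorem that such a limit has a meagre set of discontinuity points, contradicting Baire's theorem because $\varphi$ is everywhere discontinuous. You replace both the closure manipulation and the Baire-class-one theorem by the single abstract diagonal property (D) of metric convergence, and you replace the Dirichlet function by a purely set-theoretic fan indexed by $\mathbb{N}^{\mathbb{N}}$. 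Your route is self-contained (no appeal to the cited result on pointwise limits of continuous functions), never mentions continuity, and in fact proves a stronger statement: the conclusion holds for any $M$ with $|M|\ge\mathfrak{c}$ and any $N$ with at least two points, the stated hypotheses serving only to guarantee these two facts (no isolated points from the pair of dense sets, then a Cantor scheme from completeness). The paper's route, in exchange, exhibits a concrete and topologically meaningful witness --- $\varphi$ is a double pointwise limit of continuous functions but not a single one --- and explains the phenomenon as non-idempotence of the pointwise sequential closure of $C(M,N)$. Both arguments use completeness exactly once: yours through the Cantor scheme giving $|M|\ge\mathfrak{c}$, the paper's through Baire category.
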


\begin{proof}
    By reductio ad absurdum, it is assumed that there exists a metric
    $d$ on $\mathcal{F}(M,N)$ such that the convergence of sequences
    in the space $(\mathcal{F}(M,N),d)$ is equivalent to the pointwise
    convergence of sequences of functions in $\mathcal{F}(M,N)$.
    Since $(M,d_{M})$ is strongly second contable, there exists 
    a dense contable subset $D\subset M$ such that $M\setminus D$
    is dense. Let $a, b\in N$, with $a\neq b$, 
    and a continuous path $\psi:[0,1]\rightarrow N$ 
    where $\psi(0)=b$ y $\psi(1)=a$. 
    We consider the function
    \begin{eqnarray*}
        \varphi: M&\longrightarrow& N\\
        x &\longmapsto& \varphi(x):=
        \begin{cases}
            a, &\text{ if } x\in D,\\
            b, &\text{ if } x\in M\setminus D. 
        \end{cases}
    \end{eqnarray*}
    It follows that $\varphi:M\longrightarrow N$ is discontinuous in $M$.
    Let $D=\{x_{1},x_{2},\ldots,x_{n},\ldots\}$ be an enumeration of $D$. 
    For all $n\in \mathbb{N}$, let 
    \begin{eqnarray*}
        \varphi_{n}: M&\longrightarrow& N\\
        x &\longmapsto& \varphi_{n}(x):=\begin{cases}
            a, &\text{ if } x\in D_{n}:=\{x_{1},\ldots,x_{n}\},\\
            b, &\text{ if } x\in M\setminus D_{n}. 
        \end{cases}
    \end{eqnarray*}
    This sequence of functions in $\mathcal{F}(M,N)$ 
    converges pointwise to $\varphi:M\longrightarrow N$. 
    Now, for all $n\in \mathbb{N}$, 
    it will be shown that $\varphi_{n}$ can be pointwise approximated 
    by an sequence of continuous functions. In fact, let $n\in \mathbb{N}$.
    
    \begin{itemize}
        \item First, we consider $n=1$. For all $m\in \mathbb{N}$, let 
            \[
                F_{m}^{1}:=B\left[x_{1};\frac{1}{m+1}\right]\subset M 
                \,\,\, \text{ and } \,\,\, G_{m}^{1}:=M\setminus 
                B\left(x_{1};\frac{1}{m}\right)\subset M. 
            \]
            Using Urysohn's Lemma, for all $m\in \mathbb{N}$, 
            there exists a continuous function 
            $f_{U,m}^{1}:M\rightarrow [0,1]$ such that
            $f_{U,m}^{1}(F_{m}^{1})\subset \{1\}$ y 
            $f_{U,m}^{1}(G_{m}^{1})\subset \{0\}$. So, for all $m\in \mathbb{N}$, 
            we take $f_{m}^{1}:=\psi\circ f_{U,m}^{1}$.
            It follows that $(f_{m}^{1})_{m\in \mathbb{N}}$ 
            converges pointwise to $\varphi_{1}$.
        \item We now assume that $n>1$. Let $\delta>0$ be such that  
            for all $i, j\in \{1,\ldots,n\}$, if
            $i\neq j$, then 
            \[
                B(x_{i};\delta)\cap B(x_{j};\delta) = \emptyset. 
            \]
            Let $p\in \mathbb{N}$ be such that $\frac{1}{p}<\delta$. 
            For all $m\in \mathbb{N}$, we consider  
            \[
                F_{m}^{n}:=\bigcup_{i=1}^{n}B\left[x_{i};\frac{1}{m+1+p}\right]\subset
                M
            \]
            and
            \[
                G_{m}^{n}:=M\setminus \bigcup_{i=1}^{n}B\left(x_{i};\frac{1}{m+p}\right)
                \subset M.
            \]
            Since $M$ is a normal space, by Urysohn's Lemma, for all
            $m\in \mathbb{N}$, there exists a continuous function 
            $f_{U,m}^{n}:M\rightarrow [0,1]$ such that 
            $f_{U,m}^{n}(F_{m}^{n})\subset \{1\}$ y $f_{U,m}^{n}(G_{m}^{n})\subset \{0\}$. 
            Therefore, for all $m\in \mathbb{N}$, we take $f_{m}^{n}:=
            \psi\circ f_{U,m}^{n}$. Then, $(f_{m}^{n})_{m\in \mathbb{N}}$ 
            converges pointwise to $\varphi_{n}$. 
    \end{itemize}
    Thus, by hypothesis, $\varphi_{n}\in \overline{C(M,N)}$, closure of 
    $C(M,N)$ in $(\mathcal{F}(M,N),d)$. 
    It follows that $(\varphi_{m})_{m\in \mathbb{N}}$ 
    is a sequence in the space $\overline{C(M,N)}$. 
    Since
    $\varphi_{n}\xrightarrow[n\to +\infty]{} \varphi$ in $(\mathcal{F}(M,N),d)$, 
    we have $\varphi\in \overline{C(M,N)}$. So, there exists a sequence
    of continuous functions that converges to $\varphi$ 
    in $(\mathcal{F}(M,N),d)$. Then, $\varphi$ 
    can be approximated pointwise by a sequence of continuous functions.
    But by Proposition 15 of Chapter VI \cite{Lima}, 
    it follows that the set of discontinuity points of $\varphi$ is
    meagre in $M$. Since $\varphi$ is discontinuous in $M$, we have that
    $M$ is meagre in $M$. By Baire's Theorem, it follows that 
    $M=\emptyset$, which contradicts the assumption. 
\end{proof}

%%%%%%%%%%%%%%%%%%%%%%%%%%%%%%%%%%%%%%%%%%%%%%%%%%%%%%%%%%%%%%%%%%%%%%%%%%%%%%%%%%%%%%%%
%%%%%%%%%%%%%%%%%%%%%%%%%%%%%%%%%%%%%%%%%%%%%%%%%%%%%%%%%%%%%%%%%%%%%%%%%%%%%%%%%%%%%%%%

\end{document}